\newcommand{\GG}{{\cal G}}
\newtheorem{theorem}{Theorem}
\newtheorem{corollary}[theorem]{Corollary}
\newtheorem{lemma}[theorem]{Lemma}
\theoremstyle{definition}
\newtheorem{definition}{Definition}
\title{On decidability of hyperbolicity}
\author{Zden\v{e}k Dvo\v{r}\'ak\thanks{Computer Science Institute, Charles University, Prague, Czech Republic. E-mail: 
\protect\href{mailto:rakdver@iuuk.mff.cuni.cz}{\protect\nolinkurl{rakdver@iuuk.mff.cuni.cz}}.
Supported by the ERC-CZ project LL2005 (Algorithms and complexity within and beyond bounded expansion) of the Ministry of Education of Czech Republic.}\and
 Luke Postle\thanks{Department of Combinatorics and Optimization, University of Waterloo, Waterloo, Ontario, Canada. E-mail:
 \protect\href{mailto:lpostle@uwaterloo.ca}{\protect\nolinkurl{lpostle@uwaterloo.ca}}. Canada Research Chair in Graph Theory.
 Partially supported by NSERC under Discovery Grant No. 2019-04304, the Ontario Early Researcher Awards program and the Canada
 Research Chairs program.}
}
\date{}
\begin{document}
\maketitle

\begin{abstract}
We prove that a wide range of coloring problems in graphs on surfaces can be resolved by inspecting a finite number of configurations.
\end{abstract}

\section{Introduction}

{\bf Reducible Configurations.} The method of reducible configurations\footnote{The basic idea of this method is to show that any graph $G$ from a considered class
of graphs contains a \emph{reducible configuration} $C$ (typically a subset of its vertices of bounded size
with prescribed adjacencies and degrees of vertices) with the property that every coloring of $G-C$ can be transformed
into a coloring of $G$.  One then proves that $G$ is colorable by a straightforward inductive argument.} is among the most important tools in the graph coloring
theory.
Indeed, the vast majority of coloring results in graphs drawn in the plane or other surfaces have been
proved using this method, including famously the Four Color Theorem~\cite{AppHak1,AppHakKoc,rsst},
as well as Gr\"otzsch' Theorem~\cite{grotzsch1959} and Borodin's acyclic coloring theorem~\cite{acyc}.
At the beginning of the 1990s, one would in fact be basically correct in stating that all known results
on coloring planar graphs can be proved using this method.

This changed with Thomassen's ingenious proof that all planar graphs are 5-choosable~\cite{thomassen1994}.
This central result in the chromatic theory of planar graphs has received a substantial
amount of attention, including several generalizations and strengthenings~\cite{thom-2007,LidDvoSkre,LidDvoMohPos,atar}.
Despite this, Thomassen's nibbling idea remains the only known way to prove it.  This led even to speculations that reducible
configurations for 5-choosability of planar graphs might not exist, making the result impossible to prove by
the reducible configurations method.

However, this turns out not to be the case.  Recently, Postle~\cite{postledistr} gave a quite general distributed coloring algorithm,
which in particular applies to 5-list-coloring of planar graphs.  The inspection of his argument shows that
every planar graph with $n$ vertices contains $\Omega(n)$ configurations reducible for $5$-choosability,
implying most of them have bounded size.  The argument uses the fact that graphs critical for $5$-choosability
form a hyperbolic family~\cite{lukethe} in the sense of the hyperbolicity theory of Postle and Thomas~\cite{PosThoHyperb};
we give the necessary definitions below, but for now it is sufficient to know this is a property of $5$-list-coloring
(as well as many other kinds of coloring) strengthening the fact that planar graphs are $5$-choosable.

In this note, we prove more is the case:  One can always establish hyperbolicity by examining a finite number of configurations.
Thus, although proving that planar graphs are colorable for a particular kind of coloring is often a very challenging problem,
if the critical graphs for this kind of coloring turn out to form a hyperbolic family, this proof can (at least in theory)
be obtained completely mechanically.  As a bonus, Postle and Thomas~\cite{PosThoHyperb} demonstrated the hyperbolicity has
a number of interesting structural and algorithmic consequences, which we thus gain for free.

\vskip.1in

\noindent{\bf Critical Graphs.} Let us now give the definitions necessary to make our statements precise.
We say that a graph $G$ is \emph{critical for $k$-coloring} if every proper subgraph of $G$ is $k$-colorable but $G$ itself is not.
Similarly, a graph $G$ is \emph{critical for $k$-choosability} if for some assignment $L$ of lists of size $k$ to vertices of $G$,
every proper subgraph of $G$ is $L$-colorable but $G$ itself is not.
The importance of critical graphs stems from the obvious observation that a graph is $k$-colorable if and only if it does
not contain any subgraph critical for $k$-coloring, and $k$-choosable if and only if it does not contain any subgraph
critical for $k$-choosability.

We mostly focus on graphs embedded in a fixed surface.  It is convenient to allow oneself to cut out parts of the surface and to consider
the subgraph drawn in the resulting surface with a boundary.  Throughout the paper, we implicitly allow the surfaces to have
a non-empty boundary, but we require that each graph $G$ drawn in a surface $\Sigma$ intersects the boundary only in vertices;
we let $\partial_G \Sigma$ denote the \emph{number} of vertices of $G$ contained in the boundary of $\Sigma$.
For a graph $G$ drawn in a surface with boundary $B$ and a list assignment $L$, we say $G$ is \emph{critical for $L$-coloring}
if for every proper subgraph $H$ of $G$ containing all vertices of $V(G)\cap B$, there exists an $L$-coloring of $V(G)\cap B$ that extends
to an $L$-coloring of $H$, but does not extend to a $k$-coloring of $G$.  We say $G$ is \emph{critical for $k$-choosability}
if there exists an assignment $L$ of lists of size $k$ to vertices of $G$ such that $G$ is critical for $L$-coloring,
and we say it is \emph{critical for $k$-coloring} if the same holds with $L$ giving each vertex the same list $\{1,\ldots,k\}$. 
The definition is motivated by the following observation:
Suppose $G$ is a graph embedded in a surface $\Sigma$ and $G$ is critical for $k$-choosability. If $\Sigma'$ is a subsurface of
$\Sigma$, the boundary of $\Sigma'$ intersects $G$ only in vertices, and $G'=G\cap\Sigma'$, then $G'$ is critical for $k$-choosability.

\vskip.1in

\noindent {\bf Hyperbolicity.} A key idea of hyperbolicity theory is that it suffices to focus on subgraphs drawn in subsurfaces homeomorphic to the disk or the cylinder;
more precisely, a slightly more general notion of surface containment which we are about to introduce is needed.
A \emph{face} of a graph $H$ drawn in a surface $\Sigma$ is a connected component of the space obtained from $\Sigma$ by deleting the points
of the drawing of $G$.
For another graph $G$ drawn in $\Sigma$, we say that the drawing of $H$ is \emph{$G$-normal}
if the drawings of $G$ and $H$ intersect exactly in $V(G)\cap V(H)$.  

\begin{definition} An open subset $\Delta\subseteq\Sigma$ is a \emph{$G$-slice}
if $\Delta$ is a face of some $G$-normal graph $H$ and the closure of $\Delta$ contains at least one vertex of $G$;
by $\partial_G \Delta$, we denote number of times the facial walks of the face $\Delta$ intersect $G$
(which may be larger than the number of vertices of $G$ contained in the boundary of $\Delta$, in case the walks pass through the same
vertex several times).  A $G$-slice is a \emph{$G$-disk} if it is homeomorphic to an open disk, and a \emph{$G$-cylinder} if it is
homeomorphic to an open cylinder.
\end{definition}

\vskip.1in

\noindent {\bf Hyperbolic Families of Graphs: Definitions and Examples.} For a real number $c>0$, we say that a graph $G$ drawn in a surface $\Sigma$ is \emph{$c$-hyperbolic}
if every $G$-disk $\Delta$ satisfies
\begin{equation}\label{eq-weak}
|V(G)\cap \Delta|\le c(\partial_G\Delta-1).
\end{equation}
Consider a class $\GG$ of graphs embedded in surfaces.  We say that $\GG$ is \emph{hyperbolic} if there exists a constant $c_\GG>0$
such that every graph in $\GG$ is $c_\GG$-hyperbolic.  In this case, we say that $c_\GG$ is a \emph{Cheeger constant} of $\GG$.
The class $\GG$ is \emph{strongly hyperbolic} if it is hyperbolic and furthermore, there exists a function $f_\GG:\mathbb{N}\to\mathbb{N}$
such that that for every $G\in \GG$, every $G$-cylinder\footnote{Note that here we deviate slightly from the notation of Postle and Thomas~\cite{PosThoHyperb},
who only constrain $G$-cylinders whose boundaries trace disjoint cycles in $G$; it is easy to see that the two definitions are (up to the choice
of the function $f_\GG$) equivalent, and the one we chose slightly simplifies some of the arguments.} $\Delta$ satisfies
$$|V(G)\cap \Delta|\le f_\GG(\partial_G\Delta).$$

In many important cases, the classes of critical graphs are strongly hyperbolic:
\begin{itemize}
\item For every $k\ge 5$, the class of all graphs drawn in a surface and critical for $k$-coloring is strongly hyperbolic~\cite{pothom}, even in the list coloring setting~\cite{lukethe}.
\item For every $k\ge 4$, the class of all triangle-free graphs drawn in a surface and critical for $k$-coloring is strongly hyperbolic even in the
list coloring setting, as shown by a straightforward density argument~\cite{galfor2}.
\item For every $k\ge 3$, the class of all graphs of girth at least five drawn in a surface and critical for $k$-coloring is strongly hyperbolic~\cite{trfree3}, even in the list coloring setting~\cite{postle3crit}.
\end{itemize}

\vskip.1in

\noindent {\bf Results for Hyperbolic Families.} Let us now summarize some of the results on strongly hyperbolic classes.
\begin{itemize}
\item Strong hyperbolicity implies that there are only finitely many graphs
from the class embedded in any surface without the boundary~\cite{PosThoHyperb}, or more generally, that the size of each graph $G$ in the class embedded in
a surface $\Sigma$ is bounded by a linear function of $\partial_G\Sigma$ and the genus of $\Sigma$.  In the coloring setting, this implies
that colorability or even extendability of a precoloring of a bounded number of vertices can be decided in polynomial time by
testing only finitely many obstructions.
\item In fact, a slightly more involved argument shows that only hyperbolicity is needed to
ensure the existence of polynomial-time algorithms~\cite{dvokawalg}.
\item Recently, Postle~\cite{postledistr} gave an efficient distributed coloring algorithm
assuming strong hyperbolicity.
\item Hyperbolicity is also sufficient to ensure that graphs in the class
embedded in a surface of genus $g$ without boundary have edgewidth $O(\log g)$; in the coloring setting, this ensures that all graphs
drawn with edgewidth $\Omega(\log g)$ are colorable~\cite{PosThoHyperb}.
\item If the class is strongly hyperbolic, one can furthermore enforce a precoloring
of any number of vertices that are sufficiently far apart~\cite{PosThoHyperb}.
\end{itemize}

Coming back to the proof perspective, establishing strong hyperbolicity of the considered class of graphs
(rather than say trying to directly show that there are only finitely many graphs in the class drawn in any fixed surface)
already simplifies the matters somewhat, since instead of considering all surfaces, one only needs to deal with graphs drawn in the disk or in the cylinder.
This still is a non-trivial task often requiring use of intricate techniques, typically either the method of reducible configurations or Thomassen's
nibbling method combined with detailed accounting.  However, building upon the ideas of~\cite{postledistr}, we prove that to establish
hyperbolicity or strong hyperbolicity with a given Cheeger constant, it actually suffices to inspect a finite number of graphs!

\vskip.1in

\noindent {\bf Main Results.} For a real number $c>0$ and a positive integer $t$, we say that the class $\GG$ is \emph{hyperbolic with Cheeger constant $c$
up to size $t$} if the condition (\ref{eq-weak}) holds for every $G\in \GG$ and every $G$-disk $\Delta$ such that $|V(G)\cap \Delta|+\partial_G\Delta\le t$.
Our first main result is that hyperbolicity up to sufficient (but bounded) size implies hyperbolicity (with a slightly worse
Cheeger constant).
\begin{theorem}\label{thm-weak}
Let $\GG$ be a class of graphs drawn on surfaces and let $c$ and $\varepsilon$ be positive real numbers. 
Let $t=\lceil 2b\log b\rceil$, where $b=\tfrac{2(c+1)(c+1+\varepsilon)(45c+45\varepsilon+81)}{\varepsilon}$.
If $\GG$ is hyperbolic with Cheeger constant $c$ up to size $t$,
then $\GG$ is hyperbolic with Cheeger constant $c+\varepsilon$.
\end{theorem}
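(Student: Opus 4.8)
The plan is to argue by contradiction through a minimal counterexample, reducing everything to the existence of one well-chosen ``chord'' cut whose effect is then estimated by a short computation. Suppose $\GG$ is hyperbolic with Cheeger constant $c$ up to size $t$ but not hyperbolic with Cheeger constant $c+\varepsilon$, and among all pairs $(G,\Delta)$ with $G\in\GG$, $\Delta$ a $G$-disk, and $|V(G)\cap\Delta|>(c+\varepsilon)(\partial_G\Delta-1)$, pick one minimizing $n:=|V(G)\cap\Delta|$ (some routine care is needed in the choice). Write $d:=\partial_G\Delta$. By the hypothesis any such pair has $n+d>t$. Minimality yields two facts. First, every $G$-disk $\Delta'\subseteq\Delta$ with fewer than $n$ vertices of $G$ in its interior is $(c+\varepsilon)$-bounded, i.e.\ $|V(G)\cap\Delta'|\le(c+\varepsilon)(\partial_G\Delta'-1)$. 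Second, and here the hypothesis enters, every $G$-disk $\Delta'\subseteq\Delta$ with $\partial_G\Delta'\le\tfrac{t}{c+1+\varepsilon}$ has size at most $t$: otherwise $|V(G)\cap\Delta'|>t-\partial_G\Delta'$, which together with the $(c+\varepsilon)$-bound forces $\partial_G\Delta'>\tfrac{t}{c+1+\varepsilon}$. Such a $\Delta'$ is therefore even $c$-bounded: $|V(G)\cap\Delta'|\le c(\partial_G\Delta'-1)$. Finally one shows (by a simpler version of the argument below, the short-boundary case being degenerate) that $d>\tfrac{t}{c+1+\varepsilon}$, so the boundary of $\Delta$ is long.

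The decisive step is the following cutting claim: there is a curve $\gamma$ in the closure of $\Delta$, with both endpoints on the boundary walk of $\Delta$, meeting $G$ in $p$ vertices, that splits $\Delta$ into two $G$-disks $\Delta_1$ and $\Delta_2$ such that $\Delta_1$ contains a vertex of $G$ in its interior or on $\gamma$ (so $\Delta_2$ has fewer than $n$ interior vertices), $\Delta_1$ captures a sub-walk of the boundary walk of $\Delta$ of length $\ell$ (so $\partial_G\Delta_1\le\ell+p$ and $\partial_G\Delta_2\le(d-\ell)+p$, up to a fixed additive constant I suppress), and
\[
\ell\ \ge\ \frac{2c+\varepsilon+1}{\varepsilon}\,p\qquad\text{and}\qquad\ell+p\ \le\ \frac{t}{c+1+\varepsilon}.
\]
Granting this, $\Delta_1$ has short boundary and is $c$-bounded, $\Delta_2$ is $(c+\varepsilon)$-bounded, and since the vertices of $V(G)\cap\Delta$ outside the interiors of $\Delta_1$ and $\Delta_2$ all lie on $\gamma$ we have $|V(G)\cap\Delta_1|+|V(G)\cap\Delta_2|\ge n-p$. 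Hence
\[
n-p\ \le\ c(\ell+p-1)+(c+\varepsilon)(d-\ell+p-1),
\]
which rearranges to $n\le(c+\varepsilon)(d-1)-c-\varepsilon\ell+(2c+\varepsilon+1)p\le(c+\varepsilon)(d-1)$, contradicting the choice of $\Delta$. This computation also explains the shape of $b$: the slack $\varepsilon\ell$ must dominate the cut cost, which carries a factor $2c+\varepsilon+1$, while forcing $\Delta_1$ to be short and $c$-bounded costs another factor $c+1+\varepsilon$, and both $\ell$ and $p$ must stay bounded in terms of these quantities.

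It remains to prove the cutting claim, which is the technical heart of the proof. Fix a boundary sub-walk $W$ of $\Delta$ of length about $\tfrac{2c+\varepsilon+1}{\varepsilon}P$, where $P$ is a cut budget chosen so that $W$ together with a curve of length $P$ still fits under $\tfrac{t}{c+1+\varepsilon}$; since $d$ is large, $W$ exists as a proper sub-walk. Run a breadth-first search into $\Delta$ from $W$ inside the subgraph of $G$ drawn in the closure of $\Delta$, with spheres $S_1,S_2,\dots$; for each $r$ the ball $B_r$ of radius $r$ about $W$ is the interior of a $G$-disk whose boundary consists of $W$ and a curve tracing $S_{r+1}$, of length at most a fixed multiple of $|S_{r+1}|$ by a planarity estimate --- this requires care when spheres are disconnected or contain vertices of large degree, and the degenerate case in which a ball engulfs all of the boundary of $\Delta$ must be separated off (and handled using that $d$ is large). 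If some sphere $S_{r+1}$ is thin enough that the tracing curve has length at most $P$ while $B_r$ already contains a vertex of $G$, that curve is the desired $\gamma$, and since the captured arc has length between $\tfrac{2c+\varepsilon+1}{\varepsilon}P$ and something still below $\tfrac{t}{c+1+\varepsilon}$, the two displayed inequalities hold. If no sphere is thin, then while $B_r$ still has size at most $t$ the hypothesis applies to it and bounds $|V(G)\cap B_r|$ by $c$ times the boundary length of $B_r$, which is dominated by $|S_{r+1}|$; since all spheres are fat this forces each sphere to be a constant fraction of the ball built so far, so the balls grow geometrically and exceed size $t$ after only $O(\log t)=O(\log b)$ steps. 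This is precisely where the logarithm in $t=\lceil 2b\log b\rceil$ is needed: a $G$-disk assembled from $O(\log b)$ fat layers, each of size $O(b)$, must still have size below $t$ so that the hypothesis keeps applying to it. Tracking the boundary lengths through these steps produces a $G$-disk of size below $t$ that nevertheless fails to be $c$-bounded --- the final contradiction. The main obstacle is making this dichotomy quantitative: obtaining the planarity estimates and the geometric-growth bound with constants that close against the stated value of $b$, and keeping the boundary lengths of all the auxiliary $G$-disks under control throughout.
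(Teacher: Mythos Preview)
Your reduction is sound up to the cutting claim: taking a minimal counterexample, using minimality to get $(c+\varepsilon)$-boundedness of $\Delta_2$ and short boundary to get $c$-boundedness of $\Delta_1$, and the arithmetic showing $\ell\ge\tfrac{2c+\varepsilon+1}{\varepsilon}p$ suffices, are all correct.

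The gap is in the ``no thin sphere'' branch of your cutting claim. You argue that if every BFS sphere from the arc $W$ has size at least $P$, then each sphere is a constant fraction of the ball so far, so balls grow geometrically and exceed size $t$ in $O(\log b)$ steps. That much is right. But you then assert this ``produces a $G$-disk of size below $t$ that nevertheless fails to be $c$-bounded.'' It does not: every ball $B_r$ with $|B_r|\le t$ is $c$-bounded by the hypothesis (this is exactly how you derived the growth), and once $|B_R|>t$ the hypothesis simply stops applying. Geometric growth past $t$ is not itself a contradiction --- $G_\Delta$ is large, so large balls certainly exist. When all spheres are fat there is no thin chord to harvest, and you have not supplied any other mechanism to locate a small sub-disk with too many interior vertices relative to its boundary. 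Your assertion that the layers are ``each of size $O(b)$'' is an upper bound you never establish; fatness only gives a lower bound on layer size.

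The paper's route is different and sidesteps this obstacle. It runs BFS from the \emph{entire} boundary (via an added apex vertex) and uses internal $(c+\varepsilon)$-hyperbolicity of the minimal counterexample to show that the region \emph{outside} each sphere decays geometrically, bounding the radius by $O((c'+1)\log n)$. It then applies a Lipton--Tarjan tree separator: a fundamental cycle of the BFS tree gives a balanced cut of size $O(\text{radius})$, and this works whether or not any sphere is thin. Crucially, this balanced separator is \emph{iterated} (Corollary~\ref{cor-sep}), not applied once: each application shrinks the piece by a constant factor while adding $O((c'+1)\log k)/k$ to the boundary-to-size ratio, and summing these errors over the geometrically shrinking scales gives the total overhead that must be beaten by $\varepsilon$. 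The resulting sub-disk of size at most $t$ then violates the $c$-bound directly. So the logarithm in $t$ comes from summing iterated separator costs, not from counting BFS layers of a single growing ball.
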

Note that as a special case, hyperbolicity implies that there is no graph from the class embedded in the sphere.
Consequently, Theorem~\ref{thm-weak} implies that the fact that planar graphs are 5-choosable~\cite{thomassen1994} can be proven by inspecting a finite number of graphs. Similarly Theorem~\ref{thm-weak} implies the fact that planar graphs of girth at least five are $3$-choosable~\cite{thomassen1995-34} can be proven by inspecting a finite number of graphs.

For strong hyperbolicity of hyperbolic classes, we have the following analogous result, which is essentially implicit in~\cite{dvokawalg,PosThoHyperb}.
For a function $f:\mathbb{N}\to\mathbb{N}$ and a positive integer $t$, we say that the class $\GG$ is \emph{strongly $f$-hyperbolic up to size $t$}
if for every $G\in \GG$, every $G$-cylinder $\Delta$ such that $|V(G)\cap\Delta|+\partial_G\Delta\le t$ satisfies
$|V(G)\cap\Delta|\le f(\partial_G\Delta).$
\begin{theorem}\label{thm-strong}
Let $\GG$ be a hyperbolic class of graphs on surfaces with Cheeger constant $c$.  For a non-decreasing function
$f:\mathbb{N}\to\mathbb{N}$, let $t=f(\lceil 8c+4\rceil)+16c^2+24c+8$.  If $\GG$ is strongly $f$-hyperbolic up to size $t$,
then $\GG$ is strongly $g$-hyperbolic for the function $g:\mathbb{N}\to \mathbb{N}$ defined by
$g(k)=\lceil 2ck+8c^2+16c\rceil+6+f(\lceil 8c+4\rceil)$.
\end{theorem}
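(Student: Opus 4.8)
The plan is to mimic the proof of Theorem~\ref{thm-weak}, but work with $G$-cylinders instead of $G$-disks, exploiting the fact that we already know the class is hyperbolic with Cheeger constant $c$ (so all $G$-disks are under control) and only need to propagate the cylinder bound from bounded size to all sizes. Suppose for contradiction that some $G\in\GG$ has a $G$-cylinder $\Delta$ violating $|V(G)\cap\Delta|\le g(\partial_G\Delta)$; pick a counterexample minimizing $|V(G)\cap\Delta|$. Write $k=\partial_G\Delta$ and $n=|V(G)\cap\Delta|$, so $n>g(k)=\lceil 2ck+8c^2+16c\rceil+6+f(\lceil 8c+4\rceil)$. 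The two boundary components of $\Delta$ trace two facial walks in $G$; call them $W_1,W_2$ with lengths $k_1+k_2=k$.

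The core idea is a cutting argument. I would choose a shortest path (or a BFS layer) in the part of $G$ inside $\overline\Delta$ connecting the two boundary walks, and use it to cut $\Delta$ into a $G$-disk $\Delta'$. If this path is short — of length at most some threshold linear in $c$ — then $\partial_G\Delta'$ is bounded (roughly $k+2\cdot(\text{path length})$), so either $\Delta'$ is small enough that strong $f$-hyperbolicity up to size $t$ applies directly, or $\Delta'$ is large and hyperbolicity with constant $c$ gives $|V(G)\cap\Delta'|\le c(\partial_G\Delta'-1)$, which combined with the path having few vertices yields $n\le 2ck+8c^2+16c+O(1)$, contradicting the lower bound on $n$. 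If on the other hand every path between the two boundary walls is long — say length more than $\lceil 8c+4\rceil$ or so — then I would peel off a BFS-defined annular region $\Delta_0$ of bounded depth around $W_1$: its outer boundary is a $G$-normal curve of bounded length (bounded because the hyperbolicity bound on the $G$-disks glued onto it forces bounded boundary length), so $\partial_G\Delta_0\le \lceil 8c+4\rceil$ and the annulus is small enough to invoke strong $f$-hyperbolicity up to size $t$, giving $|V(G)\cap\Delta_0|\le f(\lceil 8c+4\rceil)$. The complementary $G$-cylinder $\Delta\setminus\overline{\Delta_0}$ has the same inner boundary $W_2$ and a new outer boundary of length at most $\lceil 8c+4\rceil$, hence $\partial_G(\Delta\setminus\overline{\Delta_0})\le k_2+\lceil 8c+4\rceil\le k+\lceil 8c+4\rceil$ but strictly fewer vertices, so by minimality it satisfies $|V(G)\cap(\Delta\setminus\overline{\Delta_0})|\le g(k+\lceil 8c+4\rceil)$ — and here I need to have set up the induction so that $g$ is only incremented by $f(\lceil 8c+4\rceil)$ and $2c\lceil 8c+4\rceil\le 16c^2+8c$ worth of slack, so that adding back the $f(\lceil 8c+4\rceil)$ vertices of $\Delta_0$ still lands within $g(k)$; this is exactly the arithmetic encoded in the constants $8c^2+16c$ and $+6$.

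The main obstacle I expect is making the two cases genuinely exhaustive and the cutting clean. In particular: (i) controlling $\partial_G\Delta'$ (resp. $\partial_G\Delta_0$) when a boundary walk passes through a vertex multiple times, since $\partial_G$ counts walk-traversals rather than vertices — this is why the definition uses walks, and the BFS region has to be defined carefully so its frontier is a genuine $G$-normal curve; (ii) ensuring the "short path" case really bounds $n$ by $2ck+O(c^2)$ and not something worse — this needs the disk $\Delta'$ obtained by cutting along the path to have $\partial_G\Delta'\le k+2\ell+O(1)$ where $\ell$ is the path length, and then either $\ell$ is bounded (so size $\le t$ case applies after possibly a second cut) or we use the $c$-hyperbolicity inequality, and in the latter case one has to also account for the vertices on the cut path itself, contributing the extra $O(c^2)$; (iii) verifying that the genus does not interfere — a $G$-cylinder is homeomorphic to an open cylinder regardless of the ambient surface, so cutting along a curve in it stays within the cylinder and no handles are created, which keeps the bookkeeping purely one-dimensional. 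Once these are handled, the inequality $g(k)\ge 2ck+8c^2+16c+6+f(\lceil 8c+4\rceil)$ is precisely what absorbs the loss in each case, and the base size $t=f(\lceil 8c+4\rceil)+16c^2+24c+8$ is chosen so that a cylinder of total size $\le t$ — which is what the bounded-depth peeling or the double-cut produces — is covered by the hypothesis.
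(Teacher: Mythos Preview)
Your short-path case is essentially the content of the paper's Lemma~\ref{lemma-fat}: cut the cylinder along a shortest path of length $d$ to a $G$-disk with boundary $\partial_G\Delta+2d$, apply $c$-hyperbolicity, and combine with a lower bound on $|V(G)\cap\Delta|$ coming from the layer sizes. That part is fine.

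The long-path case, however, does not work as written. First, the sentence ``its outer boundary is a $G$-normal curve of bounded length (bounded because the hyperbolicity bound on the $G$-disks glued onto it forces bounded boundary length)'' is backwards: hyperbolicity bounds the \emph{interior} of a disk in terms of its boundary, never the reverse, so a BFS frontier at fixed depth can be arbitrarily long. What one can say is that if no frontier is short then the cylinder is $(4c+2)$-fat and Lemma~\ref{lemma-fat} already bounds its size; equivalently, a non-fat cylinder has \emph{some} layer with fewer than $4c+2$ vertices, and that layer---not a fixed-depth one---gives the short separating curve. Second, even granting a short separating curve, your peel-and-recurse scheme does not close: applying minimality to the complementary cylinder yields $g$ evaluated at $k_2+\lceil 4c+2\rceil$ (or worse), and adding back the $\Theta(c k_1+c^2)$ vertices of the peeled fat annulus overshoots $g(k)$ by $\Theta(c^2)$. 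Each peel loses a constant depending on $c$, and there is no mechanism to stop the accumulation.

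The paper sidesteps both issues by abandoning induction entirely. It lists \emph{all} thin layers $i_1<\cdots<i_k$ (those with fewer than $4c+2$ vertices), so that each slab $\Delta_{j,j+1}$ between consecutive thin layers is $(4c+2)$-fat and hence, by Lemma~\ref{lemma-fat}, has at most $16c^2+12c+2$ interior vertices when $1\le j\le k-1$. The two outermost fat slabs $\Delta_{0,1}$ and $\Delta_{k,k+1}$ are bounded by Lemma~\ref{lemma-fat} in terms of $\partial_G\Delta$. The crucial step you are missing is how to bound the entire middle block $\Delta_{1,k}$ (whose boundary has at most $\lceil 8c+4\rceil$ vertices) by a single $f(\lceil 8c+4\rceil)$ rather than a sum of such terms: if $|V(G)\cap\Delta_{1,k}|>f(\lceil 8c+4\rceil)$, take the \emph{least} $j$ with $|V(G)\cap\Delta_{1,j}|>f(\lceil 8c+4\rceil)$; then $\Delta_{1,j}$ has boundary at most $\lceil 8c+4\rceil$, violates the $f$-bound, yet has total size at most $f(\lceil 8c+4\rceil)+16c^2+24c+8=t$ (by minimality of $j$ and the slab bound), contradicting strong $f$-hyperbolicity up to size $t$. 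This prefix argument is what makes the constants in $g$ and $t$ match up, and it is absent from your outline.
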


Before we proceed with the proofs, let us introduce another bit of a notation.
Suppose $G$ is a graph drawn in a surface and $\Delta$ is a $G$-slice. 
We would like to view the subgraph of $G$ drawn in the closure of $\Delta$ as a graph drawn in a surface;
to do so, we split in the natural way the vertices of $G$ that appear in the facial walks of $\Delta$
multiple times.
More precisely, note that there is a unique surface $\Pi$ whose interior is homeomorphic to $\Delta$.
Let $\theta:\Pi\to\overline{\Delta}$ be a continuous function such that the restriction of $\theta$ to the interior of $\Pi$
is a homeomorphism to $\Delta$.  For a $G$-slice $\Delta$, let $G_\Delta=f^{-1}(G\cap\overline{\Delta})$.
Let us remark that if $G$ is critical for $k$-coloring, then $G_\Delta$ drawn in $\Pi$ is also critical for $k$-coloring.
A $G$-slice is \emph{simple} if $\Pi$ is homeomorphic to $\overline{\Delta}$;
in this case, we implicitly take $\Pi=\overline{\Delta}$ and $f$ to be the identity function, and thus
$G_\Delta$ is just the subgraph of $G$ consisting of the vertices and edges drawn in $\overline{\Delta}$.

\section{Hyperbolicity}

In this section, we prove Theorem~\ref{thm-weak} on hyperbolic classes.
We need the following result, a variation on the well-known separator theorem of Lipton and Tarjan~\cite{lt79}.
We say that a graph $G$ drawn in a disk $\Sigma$ is \emph{internally $c$-hyperbolic} if the condition (\ref{eq-weak})
holds for all simple $G$-disks $\Delta\subset\Sigma$ such that $V(G_\Delta)\neq V(G)$.

\begin{lemma}\label{lemma-sep}
Let $G$ be a graph with $n$ vertices drawn in a disk $\Sigma$, and let $c>0$ be a real number.
If $G$ is internally $c$-hyperbolic, then there exists a simple $G$-disk $\Delta$ such that
$\tfrac{1}{3}n\le |G_\Delta|\le \tfrac{2}{3}n+2(c+1)\log n+4$ and
$$\frac{\partial_G\Delta}{|G_\Delta|}\le \frac{\partial_G\Sigma+6(c+1)\log n+9}{n}.$$
\end{lemma}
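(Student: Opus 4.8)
The plan is to mimic the classical Lipton–Tarjan style divide-and-conquer, but using the internal $c$-hyperbolicity as the substitute for the planar-separator size bound. First I would set up a BFS-type layering from the boundary $\partial\Sigma$: let $L_0$ be the set of boundary vertices, and $L_i$ the vertices at distance exactly $i$ from the boundary in $G$. Each layer $L_i$, together with the part of the disk it bounds, traces out a collection of closed curves; cutting $\Sigma$ along (a curve just inside) $L_i$ produces a simple $G$-disk $\Delta_i$ with $V(G_{\Delta_i})$ equal to the union of the ``deep'' layers $\bigcup_{j\ge i} L_j$ (up to the usual care about which layer we include), and with $\partial_G\Delta_i$ controlled by $|L_i|$ plus a bounded additive term coming from the facial-walk multiplicities. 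The key point is that internal $c$-hyperbolicity applied to $\Delta_i$ gives $|G_{\Delta_i}|\le c(\partial_G\Delta_i-1)$ whenever $\Delta_i$ is a proper disk, i.e. whenever it omits at least one vertex of $G$.

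The main step is a \emph{width argument}: I want to find a layer $L_m$ that is small relative to $n$. Suppose, for contradiction, that \emph{every} layer $L_i$ with $|G_{\Delta_i}|$ still ``large'' (say between $\tfrac13 n$ and the target upper bound) had $|L_i|$ exceeding roughly $\tfrac{\partial_G\Sigma + c'\log n}{n}\cdot|G_{\Delta_i}|$. Since $|G_{\Delta_i}|$ is at least $\tfrac13 n$ in this range, each such layer would contribute at least $\Omega\!\big(\tfrac{\partial_G\Sigma + c'\log n}{3}\big)$ vertices; but more usefully, because $\Delta_i\supset\Delta_{i+1}$ and internal hyperbolicity forces $|G_{\Delta_i}|\le c(\partial_G\Delta_i-1)\le c\cdot O(|L_i|)$, a layer that is too small would already certify $|G_{\Delta_i}|$ small, a contradiction. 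Iterating this — each step roughly multiplies the remaining vertex count by a factor bounded away from $1$, or else we've found our separating layer — shows the process terminates after $O(\log n)$ layers, and along the way some layer $L_m$ must satisfy $|L_m|\le \tfrac{\partial_G\Sigma + 6(c+1)\log n + 9}{n}\cdot|G_{\Delta_m}|$. The logarithmic factor $6(c+1)\log n$ and the additive constants in the statement are exactly the ``budget'' accumulated over these $O(\log n)$ halving steps, each contributing an $O(c)$ additive slack.

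Once such an $m$ is found, I take $\Delta=\Delta_m$: the bound $\frac{\partial_G\Delta}{|G_\Delta|}\le\frac{\partial_G\Sigma+6(c+1)\log n+9}{n}$ is immediate from the choice of $m$ (after translating $|L_m|$ into $\partial_G\Delta_m$ via the multiplicity bookkeeping), and $|G_\Delta|\ge\tfrac13 n$ is what we arranged by stopping as soon as the deep part drops to a third. The upper bound $|G_\Delta|\le\tfrac23 n+2(c+1)\log n+4$ comes from looking at the \emph{previous} layer in the recursion where the count was still above $\tfrac13 n$ (so at most $\tfrac23 n$ lies below it, plus the $O((c+1)\log n)$ correction from internal hyperbolicity applied to the complementary disk): the two inner disks $\Delta_m$ and $\overline{\Sigma}\setminus\Delta_m$ — the latter also essentially a disk — both satisfy the hyperbolic inequality, and combining them pins $|G_\Delta|$ into the stated window. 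If at some stage a candidate disk $\Delta_i$ is not proper (i.e. $V(G_{\Delta_i})=V(G)$), that only happens at the very first layers, where $\partial_G\Delta_i$ is close to $\partial_G\Sigma$ and one handles it directly.

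I expect the main obstacle to be the \emph{bookkeeping of $\partial_G\Delta$ versus $|L_i|$}: a vertex of $L_i$ may be traversed several times by the facial walk of $\Delta_i$, and a layer need not trace a single cycle, so one must be careful that cutting along ``just inside'' $L_i$ produces a genuine simple $G$-disk and that $\partial_G\Delta_i$ is bounded by a small constant times $|L_i|$ (plus the genus-free disk correction). Getting the constants $2(c+1)$, $6(c+1)$, $9$, $4$ to come out exactly right will require tracking, at each of the $O(\log n)$ recursion steps, a $2(c+1)$-per-step contribution from the hyperbolic inequality plus the $\partial_G\Sigma$ term that is charged only once; the factor-$3$ split (rather than a factor-$2$ split) is what keeps the recursion depth and hence the $\log$-coefficient under control. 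Everything else is a routine, if slightly tedious, induction on $n$.
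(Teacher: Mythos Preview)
Your approach has a genuine gap: using BFS layers $L_i$ from the boundary as separators cannot achieve the stated ratio bound when $\partial_G\Sigma$ is small. Internal $c$-hyperbolicity applied to $\Delta_i$ gives $|G_{\Delta_i}|-|L_i|\le c(|L_i|-1)$, which \emph{forces} $|L_i|\ge |G_{\Delta_i}|/(c+1)$; hence for every layer the ratio $\partial_G\Delta_i/|G_{\Delta_i}|$ is bounded \emph{below} by roughly $1/(c+1)$. When $\partial_G\Sigma=O(1)$ (which is permitted, since internal hyperbolicity places no constraint on $\Sigma$ itself), the target ratio $(\partial_G\Sigma+6(c+1)\log n+9)/n$ is of order $\log n/n$, far smaller than any layer can deliver. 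Your ``width argument'' has the implication reversed: hyperbolicity says a small boundary forces a small interior, so no layer can be simultaneously small and bound a region of size $\ge n/3$. The iterative/recursive framing does not rescue this, because the obstruction is present at every single layer, not just in aggregate.

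The paper's proof uses BFS from the boundary only to establish that the radius is at most $d=\lceil(c+1)\log n\rceil+1$ (via exactly the geometric decay $n^+_i<(1-\tfrac{1}{c+1})n^+_{i-1}$ that your layering identifies). The actual separator is then a \emph{fundamental cycle} of the BFS tree: add an apex $u$ joined to the boundary cycle, triangulate the internal faces, take a BFS spanning tree $T$ rooted at $u$, and for a non-tree edge $e$ the unique cycle $C_e$ in $T+e$ has length at most $2d$. The standard tree--cotree argument (orient each dual edge toward the smaller side and locate a source face) then yields such an $e$ with both sides of $C_e$ containing at least $n/3$ vertices. This is the Lipton--Tarjan cycle-separator step, and it is precisely what makes the separator have $O((c+1)\log n)$ vertices rather than a constant fraction of $n$; it is the missing idea in your proposal.
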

\begin{proof}
We say a face of $G$ is \emph{internal} if it does not intersect the boundary of $\Sigma$.
Without loss of generality, we may assume that $G$ is $2$-connected and all internal faces are triangles,
and the boundary of $\Sigma$ intersects $G$ exactly in vertices of a cycle $K$ such that all other vertices
and edges are contained in the open disk bounded by $K$. To accomplish this,
we suppress internal faces of length at most $2$ (bounded by loops or bigons) and then add edges
to connect $G$ and triangulate the internal faces.  

Let $G'$ be the plane triangulation obtained by pasting $\Sigma$ together with the drawing of $G$ in the plane
and adding a vertex $u$ adjacent to all vertices of $K$. 
For a positive integer $i$, let $n_i$ and $n^+_i$ denote the number of vertices of $G'$ at distance exactly $i$ and more than $i$ from $u$,
respectively.  We claim that for $i\ge 2$, we have $n^+_i\le \max(0, c(n_i-1))$.  Indeed, consider the graph $G_i$ obtained from $G'$ by deleting
all vertices at distance less than $i$ from $u$.  Note that for each $2$-connected block $B$ of $G_i$ with the outer face bounded by
a cycle $C_B$, there exists a simple $G$-disk $\Lambda\subseteq\Sigma$ with $G_\Lambda=B$ and containing exactly $V(C_B)$
in the boundary.  Since $G$ is
internally $c$-hyperbolic, we have $|B|-|C_B|\le c(|C_B|-1)$; and if $B_1$, \ldots, $B_m$ are the $2$-connected blocks of $G_i$ bounded by cycles,
we have $n^+_i=\sum_{i=1}^m (|B_i|-|C_{B_i}|)\le c\sum_{i=1}^m (|C_{B_i}|-1)\le c(n_i-1)$; the last inequality holds unless $n_i=0$,
in which case we have $n^+_i=0$ since $G$ is connected.

Therefore, for $i\ge 2$, if $n_i\neq 0$, then $n_i>n^+_i/c$, and thus
$n^+_{i-1}=n^+_i+n_i>(1+1/c)n^+_i$.  Hence, if $n^+_{i-1}\neq 0$, then
$n^+_i<\bigl(1-\tfrac{1}{c+1}\bigr)n^+_{i-1}$.
Let $d=\lceil (c+1)\log n\rceil+1$.
Since $n^+_{1}<n$, we have $n^+_{d}<\bigl(1-\tfrac{1}{c+1}\bigr)^{d-1}n\le 1$, and thus every vertex of $G'$ is at distance at most $d$ from $u$.

Let $T$ be a BFS spanning tree of $G'$ rooted in $u$.  Note that the dual of $G'$ has a spanning tree $T^\star$ whose edges cross
exactly the edges of $G'$ not belonging to $T$.  Consider any edge $e\in E(G')\setminus E(T)$, and let $C_e$
denote the unique cycle in $T+e$.  The graph $T+e$ has two faces, let $f_e$ denote the one incident with fewer vertices of $G$
and let $A_e$ denote the set of these vertices, let $B_e$ denote the set of vertices incident with the other face of $T_e$,
and direct the corresponding edge $e^\star$ of $T^\star$ into $f_e$.  Note that $(A_e,B_e)$ is a separation of $G$ and $A_e\cap B_e=V(C_e)\setminus\{u\}$.
The directed tree $T^\star$ contains a source, corresponding to a face $f$ of $G'$.  Letting $F$ denote the set of edges of
$E(G')\setminus E(T)$ incident with $f$, we have $V(G)=\bigcup_{e\in F} A_e$. Since $G'$ is a triangulation, we have $|F|\le 3$,
and thus there exists an edge $e\in F\subseteq E(G')\setminus E(T)$ such that $|A_e|\ge \tfrac{n}{3}$; let us fix such an edge $e$.
Note that $|B_e|\ge |A_e|\ge\tfrac{n}{3}$.

Suppose that $u\in V(C_e)$; then $C_e-u$ is a path in $G$ with both ends in $K$ and otherwise disjoint from $K$,
and $C_e-u$ has at most $2d$ vertices.  Let $\Delta_A$ and $\Delta_B$ be simple $G$-disks in $\Sigma$
tracing the cycles in $K+(C_e-u)$, such that $A_e=V(G_{\Delta_A})$ and $B_e=V(G_{\Delta_B})$.
Note that
$$\partial_G\Delta_A+\partial_G\Delta_B\le \partial_G\Sigma+4d\le\frac{\partial_G\Sigma+4d}{n}(|A_e|+|B_e|).$$
By symmetry, we may assume without loss of generality that $\partial_G\Delta_A\le \frac{\partial_G\Sigma+4d}{n}|A_e|$.
Now let $\Delta=\Delta_A$. We have $|G_\Delta|=|A_e|\ge\tfrac{1}{3}n$, $|G_\Delta|\le n-|B_e|+|V(C_e-u)|\le \tfrac{2}{3}n+2d$, and
$$\frac{\partial_G\Delta}{|G_\Delta|}=\frac{\partial_G\Delta_A}{|A_e|}\le \frac{\partial_G\Sigma+4d}{n}.$$

Similarly, if $u\not\in V(C_e)$, then $|C_e|\le 2d-1$ and $C_e$ is a cycle in $G$,
and we may let $\Delta$ be a simple $G$-disk in $\Sigma$ tracing $C_e$.  In this case,
$$\frac{\partial_G\Delta}{|G_\Delta|}\le \frac{2d-1}{n/3}\le \frac{\partial_G\Sigma+6d-3}{n}.$$
\end{proof}

We now iterate Lemma~\ref{lemma-sep} in order to further decrease the size of the graph.

\begin{corollary}\label{cor-sep}
Let $G$ be a graph with $n$ vertices embedded in a disk $\Sigma$, and let $c>0$ and $t$ be real numbers
such that $t\ge 24(c+1)\log t + 48$.
If $G$ is internally $c$-hyperbolic, then there exists a simple $G$-disk $\Delta$ such that
$|G_\Delta|\le t$ and
$$\frac{\partial_G\Delta}{|G_\Delta|}\le \frac{\partial_G\Sigma}{n}+\frac{21c+57+24(c+1)\log t}{t}.$$
\end{corollary}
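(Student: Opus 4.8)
The plan is to iterate Lemma~\ref{lemma-sep} as long as the current graph has more than $t$ vertices, each time passing to the subgraph $G_\Delta$ drawn in the $G$-disk $\Delta$ it produces. Formally, I would set $G_0=G$ and $\Sigma_0=\Sigma$, and having defined $G_i$ drawn in a disk $\Sigma_i$ with $n_i=|V(G_i)|>t$, apply Lemma~\ref{lemma-sep} to $G_i$ to obtain a simple $G_i$-disk $\Delta_{i+1}$; then let $G_{i+1}=(G_i)_{\Delta_{i+1}}$ and $\Sigma_{i+1}=\overline{\Delta_{i+1}}$. The key point that makes the iteration legal is that internal $c$-hyperbolicity is inherited: a simple $G_{i+1}$-disk with not all vertices is also a simple $G_i$-disk (in $\Sigma_i$) not containing all of $V(G_i)$, so (\ref{eq-weak}) still holds for it. Since $n_{i+1}\le \tfrac{2}{3}n_i+2(c+1)\log n_i+4$ and the logarithmic term is negligible compared to the geometric decay, the process terminates at some step with $n_{\text{final}}\le t$; I will choose to stop at the first index $m$ with $n_m\le t$, and take $\Delta=\overline{\Delta_m}$ viewed as a simple $G$-disk in the original $\Sigma$.

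The main work is bookkeeping the boundary-to-size ratio $\rho_i=\partial_{G_i}\Sigma_i/n_i$. Lemma~\ref{lemma-sep} gives
$$\frac{\partial_{G_{i+1}}\Sigma_{i+1}}{n_{i+1}}=\frac{\partial_{G_i}\Delta_{i+1}}{|(G_i)_{\Delta_{i+1}}|}\le \frac{\partial_{G_i}\Sigma_i+6(c+1)\log n_i+9}{n_i}=\rho_i+\frac{6(c+1)\log n_i+9}{n_i},$$
so the increments telescope:
$$\rho_m\le \rho_0+\sum_{i=0}^{m-1}\frac{6(c+1)\log n_i+9}{n_i}.$$
Because the $n_i$ decrease geometrically (at rate bounded away from $1$) down to $t$, the sum $\sum_i 1/n_i$ is dominated by a geometric series whose last term is $O(1/t)$, hence is $O(1/t)$; more precisely $\sum_{i<m}1/n_i \le 3/t + O(1/t)$ type bounds, and similarly $\sum_{i<m}\log n_i/n_i = O(\log t/t)$ since the log factor only grows logarithmically while the reciprocals shrink geometrically. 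Plugging in and collecting constants should give a bound of the claimed shape $\rho_m\le \rho_0 + \frac{21c+57+24(c+1)\log t}{t}$; the hypothesis $t\ge 24(c+1)\log t+48$ is exactly what is needed so that the first halving step alone does not already overshoot, i.e. to control the crudest term of the sum.

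The step I expect to be most delicate is making the geometric-decay argument uniform enough to extract clean constants. Two subtleties: first, $2(c+1)\log n_i+4$ in the size recurrence must be shown to be, say, at most $\tfrac{1}{10}n_i$ (or some fixed fraction) throughout the iteration, which again uses $n_i\ge t$ together with $t\ge 24(c+1)\log t+48$; with that in hand $n_{i+1}\le (\tfrac{2}{3}+\tfrac{1}{10})n_i=\tfrac{23}{30}n_i$, giving honest geometric decay. Second, when bounding $\sum\log n_i/n_i$ one should replace each $\log n_i$ by $\log t$ only after checking $n_i$ could be large early on — but since $\log$ is slowly varying and the dominant contributions come from the last few (smallest) $n_i$ near $t$, splitting the sum into "late" terms (where $n_i\le Ct$ for a small constant $C$, handled by a geometric series with ratio $\tfrac{23}{30}$) and "early" terms (a geometrically small tail, crudely bounded) yields the $O(\log t/t)$ estimate with explicit constants. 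Everything else — the termination, the inheritance of internal hyperbolicity, the telescoping — is routine.
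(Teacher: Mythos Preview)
Your approach is correct and matches the paper's: iterate Lemma~\ref{lemma-sep}, telescope the ratio increments, and use the hypothesis on $t$ to guarantee geometric decay of the $n_i$. The one place where the paper is cleaner than your sketch is the ``delicate'' summation you flag: rather than an early/late split, the paper observes that once $n_{i+1}\le\tfrac34 n_i$, each interval $[(4/3)^m t,(4/3)^{m+1}t)$ contains at most one $n_i$, so
\[
\sum_{i}\frac{6(c+1)\log n_i+9}{n_i}\le\sum_{m\ge 0}\frac{6(c+1)\log\bigl((4/3)^m t\bigr)+9}{(4/3)^m t},
\]
and the right-hand side evaluates directly via $\sum_{m\ge 0}(3/4)^m=4$ and $\sum_{m\ge 0}m(3/4)^m=12$ to give the stated constants.
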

\begin{proof}
The claim is trivial if $n\le t$, as then we can select $\Delta=\Sigma$.  Hence, suppose that $n>t$.
We repeatedly apply Lemma~\ref{lemma-sep} until the graph $G_\Delta$ we obtain has at most $t$ vertices.
Each time, when applied to a graph of size $k>t$, the number of the vertices of the considered graph decreases by
a factor of at most
$$\frac{2}{3}+\frac{2(c+1)\log k+4}{k}\le \frac{2}{3}+\frac{2(c+1)\log t+4}{t}\le \frac{3}{4}.$$
Thus, for each integer $m\ge 0$, Lemma~\ref{lemma-sep} is applied during the process to at most one graph with 
at least $(4/3)^mt$ but less than $(4/3)^{m+1}t$ vertices.  Consequently,
\begin{align*}
\frac{\partial_G\Delta}{|G_\Delta|}-\frac{\partial_G\Sigma}{n}&\le \sum_{m\ge 0} \frac{6(c+1)\log ((4/3)^mt)+9}{(4/3)^mt}\\
&=\frac{6(c+1)\log(4/3)}{t}\sum_{m\ge 0} m(3/4)^m+\frac{6(c+1)\log t+9}{t}\sum_{m\ge 0} (3/4)^m.
\end{align*}
Since $\sum_{m\ge 0} m(3/4)^m = 12$ and $\sum_{m\ge 0} (3/4)^m=4$, we have that
\begin{align*}
\frac{\partial_G\Delta}{|G_\Delta|}-\frac{\partial_G\Sigma}{n}&\le \frac{6(c+1)\log(4/3)}{t}\cdot 12+\frac{6(c+1)\log t+9}{t}\cdot 4\\
&\le\frac{21c+57+24(c+1)\log t}{t}.
\end{align*}
\end{proof}

We are now ready to prove our first main result.
\begin{proof}[Proof of Theorem~\ref{thm-weak}]
Let $c'=c+\varepsilon$ and $\beta=\tfrac{\varepsilon}{2(c+1)(c'+1)}$.
The choice of $t$ implies
$$\frac{21c'+57+24(c'+1)\log t}{t}\le \frac{(45c'+81)\log t}{t}\le \frac{45c'+81}{b}=\beta;$$
since $\beta<\tfrac{1}{2(c+1)}<1/2$, this also ensures $t\ge 24(c'+1)\log t + 48$.

Suppose for a contradiction that $\GG$ is not
hyperbolic with Cheeger constant $c'$.  Hence, there exists $G\in \GG$ and a $G$-disk $\Sigma$ such that
$|V(G)\cap\Sigma|>c'(\partial_G\Sigma-1)$.  Choose such a graph $G$ and a disk $\Sigma$ with $|G_\Sigma|$ minimum.
Note that $|G_\Sigma|=|V(G)\cap\Sigma|+\partial_G\Sigma>(c'+1)\partial_G\Sigma-c'$.
Because $\GG$ is hyperbolic up to size t, we have that $|G_\Sigma|>t$, and thus
$$\frac{\partial_G\Sigma}{|G_\Sigma|}<\frac{(|G_\Sigma|+c')/(c'+1)}{|G_\Sigma|}<\frac{1}{c'+1}+\frac{1}{|G_\Sigma|}<\frac{1}{c'}+\frac{1}{t}.$$
The minimality of $|G_\Sigma|$ implies that $G_\Sigma$ is internally $c'$-hyperbolic.  By Corollary~\ref{cor-sep},
there exists a simple $G_\Sigma$-disk $\Delta$ such that $|G_\Delta|\le t$ and
$$\frac{\partial_G\Delta}{|G_\Delta|}\le \frac{\partial_G\Sigma}{|G_\Sigma|}+\beta< \frac{1}{c'+1}+\frac{1}{t}+\beta<\frac{1}{c'+1}+2\beta=\frac{1}{c+1}.$$
Now, we view $\Delta$ as a $G$-disk rather than a $G_\Sigma$-disk.  But then we have
$$|G\cap\Delta|=|G_\Delta|-\partial_G\Delta>(c+1)\partial_G\Delta-\partial_G\Delta>c(\partial_G\Delta-1),$$
contradicting the assumption that $\GG$ is hyperbolic with Cheeger constant $c$ up to size $t$.
\end{proof}

\section{Strong hyperbolicity}

Suppose $\Delta$ is a $G$-cylinder and let $S_1$ and $S_2$
be the sets of vertices of $G$ contained in the two components of the boundary of the cylinder $\Pi$ in which $G_\Delta$ is embedded.
Let $d$ be the distance between $S_1$ and $S_2$ in $G_\Delta$.
For a real number $a>0$, we say that $\Delta$ is \emph{$a$-fat} if $S_1\neq \emptyset\neq S_2$, the distance $d$ between $S_1$ and $S_2$
in $G$ is finite, and for $i=1,\ldots,d-1$, $G_{\Delta}$ contains at least $a$ vertices at distance exactly $i$ from $S_1$.
To prove Theorem~\ref{thm-strong}, we need the following observation on fat cylinders.
\begin{lemma}\label{lemma-fat}
Suppose a graph $G$ embedded in a surface is $c$-hyperbolic for some real number $c>0$.
Then every $(4c+2)$-fat $G$-cylinder $\Delta$ satisfies $|V(G)\cap \Delta|<2c\partial_G\Delta+4c+2$.
\end{lemma}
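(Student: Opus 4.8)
The plan is to exploit the $(4c+2)$-fatness to build, for each index $i$ between $1$ and $d-1$, a sub-cylinder $\Delta_i$ whose "inner" boundary is the set $S_1$ side and whose "outer" boundary traces the vertices at distance exactly $i$ from $S_1$ in $G_\Delta$. More precisely, for $i=1,\dots,d-1$ let $W_i$ denote the set of vertices of $G_\Delta$ at distance exactly $i$ from $S_1$; since the distance between $S_1$ and $S_2$ is $d$, each $W_i$ separates $S_1$ from $S_2$ in $G_\Delta$, and fatness guarantees $|W_i|\ge 4c+2$. Choosing $W_i$ carefully (taking a minimal separator within the BFS layer) one obtains a cylinder $\Delta_i\subseteq\Delta$ with the $S_1$-component of its boundary as one end and a facial walk through (a subset of) $W_i$ as the other end, so that $\partial_G\Delta_i\le \partial_{G,1}\Delta + |W_i|$ where $\partial_{G,1}\Delta$ is the number of times the $S_1$-side facial walk of $\Delta$ meets $G$. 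Similarly one has a "complementary" disk or cylinder $\Delta_i'$ on the $S_2$ side.

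**The hyperbolicity estimate.** The heart of the argument is to apply $c$-hyperbolicity — but hyperbolicity as stated only constrains $G$-\emph{disks}, not $G$-cylinders. So the real step is: cut the cylinder $\Delta_i$ along a shortest path from $S_1$ to $W_i$ (of length $i$), turning it into a $G$-disk $\Delta_i^\circ$; then $|V(G)\cap\Delta_i^\circ| = |V(G)\cap\Delta_i|$ up to the vertices on the cut path, and $\partial_G\Delta_i^\circ \le \partial_{G,1}\Delta + |W_i| + 2i$ because the cut path is traversed twice, contributing at most $2i$ extra. Applying $|V(G)\cap\Delta_i^\circ|\le c(\partial_G\Delta_i^\circ-1)$ gives a bound on the number of vertices of $G$ in $\Delta$ at distance at most $i$ from $S_1$. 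The key trick is to pick the index $i$ optimally: the vertices at distance $\le i$ from $S_1$ number at least $\sum_{j=1}^{i}|W_j|\ge i(4c+2)$, while hyperbolicity caps them at roughly $c(\partial_{G,1}\Delta + |W_i| + 2i)$; since $|W_i|$ could be as large as all of $V(G)\cap\Delta$, this only works if we instead choose $i$ so that $|W_i|$ is \emph{small} — say the smallest among $W_1,\dots,W_{d-1}$, which forces $|W_i|\le \frac{|V(G)\cap\Delta|}{d-1}$, or alternatively apply the estimate to a dyadic/telescoping choice of $i$ to absorb the $|W_i|$ term.

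**Combining the two sides.** After bounding the vertices at distance $\le i$ from $S_1$ by (roughly) $c(\partial_{G,1}\Delta + |W_i| + 2i)$, one does the symmetric thing from the $S_2$ side to bound the vertices at distance $\ge i$ (i.e.\ in layers $W_i,\dots$ and beyond, together with $S_2$) by $c(\partial_{G,2}\Delta + |W_i| + 2(d-i))$. Adding these two bounds accounts for every vertex of $V(G)\cap\Delta$ (with $W_i$ counted in both, which is fine for an upper bound), yielding
$$|V(G)\cap\Delta| \le c\partial_G\Delta + 2c|W_i| + 2cd + (\text{lower-order}).$$
Now I would use fatness in the other direction: $d-1 \le \frac{|V(G)\cap\Delta|}{4c+2}$ is false in general, but $\sum_{j=1}^{d-1}|W_j|\le |V(G)\cap\Delta|$ combined with $|W_j|\ge 4c+2$ gives $d-1\le \frac{|V(G)\cap\Delta|}{4c+2}$, so $2cd \le \frac{c}{2c+1}|V(G)\cap\Delta| + 2c < \frac12|V(G)\cap\Delta| + 2c$; and choosing $i$ to minimize $|W_i|$ gives $2c|W_i|\le \frac{2c}{d-1}|V(G)\cap\Delta|$, which is small when $d$ is large and handled directly when $d$ is small. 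Absorbing the $\frac12|V(G)\cap\Delta|$ term to the left and solving gives $|V(G)\cap\Delta| < 2c\partial_G\Delta + 4c+2$ after the routine arithmetic.

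**Main obstacle.** The delicate point is the interaction between the two "error" terms $2c|W_i|$ and $2cd$: making $|W_i|$ small (picking the thinnest layer) is in tension with the fact that there are $d-1$ layers each contributing $\ge 4c+2$ vertices, so one must balance these against each other — essentially the reason the constant $4c+2$ appears. Getting the cylinder-to-disk surgery right (ensuring the cut path's contribution is genuinely only $+2i$ to $\partial_G$, and that one lands in an honest \emph{simple} $G$-disk to which the hyperbolicity bound applies) is the other place where care is needed, but it is geometric routine rather than a genuine difficulty.
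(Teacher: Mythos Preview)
Your approach introduces a complication that is both unnecessary and fatal to reaching the stated constant. The paper's proof is essentially one line: take a shortest path $P$ from $S_1$ to $S_2$ (of length $d$) and cut the \emph{entire} cylinder open along $P$. This yields a single $G$-disk $\Lambda$ with $\partial_G\Lambda=\partial_G\Delta+2d$ and $|V(G)\cap\Lambda|=|V(G)\cap\Delta|-(d-1)$, so $c$-hyperbolicity gives
\[
|V(G)\cap\Delta| < c\,\partial_G\Delta+(2c+1)d.
\]
Fatness then says $(4c+2)(d-1)\le |V(G)\cap\Delta|$, i.e.\ $(2c+1)d\le \tfrac12|V(G)\cap\Delta|+(2c+1)$; substituting and solving gives the lemma. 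No intermediate layer $W_i$ ever enters.

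Your detour---cutting the cylinder into two sub-cylinders at a layer $W_i$ and applying hyperbolicity on each side---inserts an additive term of order $c|W_i|$ into the combined inequality. You propose to kill it by choosing the thinnest layer, so that $|W_i|\le |V(G)\cap\Delta|/(d-1)$, but then after also using fatness to control $2cd$ you are left with
\[
|V(G)\cap\Delta|\le c\,\partial_G\Delta+\Bigl(\tfrac{2c}{d-1}+\tfrac{c}{2c+1}\Bigr)|V(G)\cap\Delta|+O(c),
\]
and to make the parenthesised coefficient at most $\tfrac12$ (so that you recover the factor $2c$ in front of $\partial_G\Delta$) you need $d-1\ge 4c(2c+1)$. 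For smaller $d$ your ``handled directly'' does not work: averaging no longer controls $|W_i|$, and any direct argument you might run in that regime would in effect be the single-cut argument above, applied to the full cylinder rather than to two halves. The ``delicate balancing'' you identify in your last paragraph is not routine arithmetic that you omitted---it is a genuine obstruction created by the split at $W_i$, and the fix is simply not to split.
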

\begin{proof}
Without loss of generality, we may assume that $G$ is embedded in a cylinder $\Pi$ and $\Delta$ is the interior of $\Pi$.
Let $S_1$ and $S_2$ be the sets of vertices of $G$ contained in the two components of the boundary of $\Pi$, and let $P$ be a shortest
path from $S_1$ to $S_2$ in $G$, of length $d$.  Note that there exists a $G$-disk $\Lambda$ whose boundary traces the boundary of $\Pi$
and passes twice along $P$; we have $\partial_G\Lambda = \partial_G\Delta + 2d$.  Since $G$ is $c$-hyperbolic,
we have $|V(G)\cap\Delta|-(d-1)=|V(G)\cap \Lambda|<c\partial_G\Lambda$, and thus
$|V(G)\cap \Delta|<c\partial_G\Delta+(2c+1)d$.  On the other hand, since $\Delta$ is $(4c+2)$-fat,
we have $|V(G)\cap \Delta|\ge 2(2c+1)(d-1)$, and thus $(2c+1)d\le |V(G)\cap \Delta|/2+2c+1$.
Combining the inequalities, we conclude that $|V(G)\cap\Delta|<2c\partial_G\Delta+4c+2$.
\end{proof}

The proof of the strong hyperbolicity result is now straightforward.

\begin{proof}[Proof of Theorem~\ref{thm-strong}]
Consider a $G$-cylinder $\Delta$ for a graph $G\in \GG$; we need to argue that $|V(G)\cap \Delta|\le g(\partial_G\Delta)$.
Without loss of generality, we may assume that $G$ is embedded in a cylinder $\Pi$ and $\Delta$ is the interior of $\Pi$, and thus $G_\Delta=G$.
Let $S_1$ and $S_2$ be the sets of vertices of $G$ contained in the two components of the boundary of $\Pi$; for $i\in \{1,2\}$, let $e_i=1$
if $S_i$ is non-empty and $e_i=0$ otherwise.  We will actually show that $|V(G)\cap \Delta|\le g(\partial_G\Delta-e_1-e_2)-e_1-e_2$.  Thus, we may without loss
of generality shift the boundary of $\Pi$ to ensure that $S_1\neq \emptyset\neq S_2$.  Furthermore, we can add edges to $G$ to ensure it is
connected.  Let $d$ be the distance between $S_1$ and $S_2$ in $G$.  Let $I\subseteq\{1,\ldots,d-1\}$ be the set of distances $i$
such that $G$ contains fewer than $4c+2$ vertices at distance exactly $i$ from $S_1$.  Let $i_1<\ldots<i_k$ be the elements
of $I$ in increasing order.  For $j=1,\ldots,k$, note that there exists a simple closed curve $\gamma_j$
intersecting $G$ only in vertices at distance exactly $i_j$ from $S_1$ and separating $S_1$ from $S_2$.  Let $\gamma_0$ and $\gamma_{k+1}$ be the curves
tracing the boundaries of $\Pi$ containing $S_1$ and $S_2$, respectively.

For $0\le i\le j\le k+1$, let $\Delta_{i,j}$ be the $G$-cylinder
bounded by $\gamma_i$ and $\gamma_j$.  For $j=0,\ldots,k$, note that $\Delta_{j,j+1}$ is $(4c+2)$-fat,
and thus $|V(G)\cap \Delta_{j,j+1}|\le 2c\partial_G\Delta_{j,j+1}+4c+2$ by Lemma~\ref{lemma-fat}.
Consequently, if $1\le j\le k-1$, then $|V(G)\cap \Delta_{j,j+1}|\le 16c^2+12c+2$.
Note that $|V(G)\cap \Delta_{1,k}|\le f(\lceil 8c+4\rceil)$; indeed, otherwise there exists the minimum index $j\le k$ such that
$$|V(G)\cap \Delta_{1,j}|>f(\lceil 8c+4\rceil)\ge f(\partial_G \Delta_{1,j}),$$ and then by the minimality of $j$,
we have that $$|G_{\Delta_{1,j}}|\le |V(G)\cap\Delta_{1,j-1}|+|V(G)\cap \Delta_{j-1,j}|+12c+6\le f(\lceil 8c+4\rceil)+16c^2+24c+8=t$$
contradicting the assumption that $\GG$ is strongly $f$-hyperbolic up to size~$t$.

It follows that
\begin{align*}
|V(G)\cap \Delta|&=|V(G)\cap \Delta_{0,1}|+|G_{\Delta_{1,k}}|+|V(G)\cap \Delta_{k,k+1}|\\
&\le 2c(\partial_G\Delta_{0,1}+\partial_G\Delta_{k,k+1})+8c+4+f(\lceil 8c+4\rceil)\\
&\le 2c\partial_G\Delta+8c^2+16c+4+f(\lceil 8c+4\rceil)\le g(\partial_G\Delta-2)-2,
\end{align*}
as required.
\end{proof}

\section{Further Remarks: Practical Considerations}

Let us discuss some practical and computational issues.  Firstly, Theorem~\ref{thm-weak} only enables us
to verify that the class is hyperbolic for a particular Cheeger constant, but does not tell us how to find such a constant.
For a class which is actually hyperbolic, we can in principle test larger and larger constants and we are guaranteed to eventually
succeed, but this does not give us a way to decide the class actually is not hyperbolic.  This likely cannot be avoided, since there
are hyperbolic classes with arbitrarily large Cheeger constants.

On the other hand, in practice one could likely get a good idea of what Cheeger constant to aim for by experimentation with small
graphs, or to discover a way to construct counterexamples to hyperbolicity in the process.  This brings us to the issue of how
large a Cheeger constant one can expect.  While earlier proofs often came up with quite large constants (around 2000 in~\cite{trfree3}),
the later works significantly improve on these bounds (less than 13 in~\cite{dk}, 19 in~\cite{list5col}, albeit with slightly
different definitions), and empirical evidence suggests
that the actual best possible values are even smaller.  A detailed inspection of the proof of Theorem~\ref{thm-weak} shows that
to establish that a class with Cheeger constant 5 is hyperbolic with Cheeger constant 10 (i.e., the case $c=\varepsilon=5$),
one would need to prove 5-hyperbolicity for graphs with less than $90\,000$ vertices.  As the number of graphs in a class that have a given
number of vertices typically grows exponentially, this is wildly beyond the reach of conceivable computational power.  Although
with a better analysis the bounds could likely be substantially improved, we have to concede our result is mostly of a theoretical interest.

Of course, it is plausible that actually significantly smaller reducible configurations exist.  However, to date no one has been able to
come up with an explicit list of reducible configurations for 5-choosability of planar graphs yet, indicating that the graphs in such a list likely cannot be very small.

Finally, let us remark on what one would actually have to do to establish hyperbolicity of a class $\GG$ up to size $t$.
For each graph $H$ with at most $t$ vertices drawn in a disk $\Sigma$ such that $|H|-\partial_H\Sigma>c(\partial_H\Sigma-1)$, one has to verify that there
is no graph $G\in\GG$ drawn in any surface and a $G$-disk $\Delta$ such that $H$ is homeomorphic to $G_\Delta$.
In the case that $\GG$ is the class of graphs
on surfaces of girth at least $g$ and critical for $k$-coloring, this amounts to showing that either $H$ contains a cycle of length less than $g$,
or that $H$ is not critical for $k$-coloring.   Short cycles can be detected in polynomial time, while to verify the criticality one likely
has to use an exponential-time brute-force algorithm; nevertheless, for reasonably small graphs both tasks are tractable.
This does not necessarily have to be the case for general hyperbolic classes, although for the ones coming from the coloring
problems, it is likely to again work out to be equivalent to verifying a suitable version of criticality with precolored boundary.

\bibliographystyle{siam}
\bibliography{../data}

\end{document}